\documentclass[12pt, reqno]{amsart}
\makeatletter
\@namedef{subjclassname@1991}{$\mathrm{1991}$ Mathematics Subject Classification}
\@namedef{subjclassname@2000}{$\mathrm{2000}$ Mathematics Subject Classification}
\@namedef{subjclassname@2010}{$\mathrm{2010}$ Mathematics Subject Classification}
\@namedef{subjclassname@2020}{$\mathrm{2020}$ Mathematics Subject Classification}
\makeatother
\usepackage{amsmath,amsthm, amscd, amsfonts, amssymb, graphicx, color}
\usepackage[bookmarksnumbered, colorlinks, plainpages,linkcolor=blue,urlcolor=blue,citecolor=blue]{hyperref}
\newtheorem{thm}{Theorem}[section]
\newtheorem{cor}[thm]{Corollary}
\newtheorem{lem}[thm]{Lemma}

\newtheorem{defn}[thm]{Definition}

\newtheorem{exam}[thm]{Example}
\numberwithin{equation}{section}

\begin{document}

\title{The higher-order group inverse in a ring}

\author{Dayong Liu}
\address{College of Computer and Mathematics, Central South University of Forestry and Technology, Changsha 410004, China}
\email{<liudy@csuft.edu.cn>}

\author{Huanyin Chen}
\address{School of Big Data, Fuzhou University of International Studies and Trade, Fuzhou 350202, China}
\email{<huanyinchenfz@163.com>}

\subjclass[2020]{16U99, 15A09, 46H05.} \keywords{group inverse; higher-order group inverse;
weak higher-order group inverse; nilpotent, ring.}

\begin{abstract} This paper introduces and studies the higher-order group inverse in a ring. We extend known properties of the higher-order group inverse from complex matrices to elements of a ring and, in the process, derive new results. We further characterize when a ring element can be expressed as the sum of a higher-order group element and a nilpotent element.\end{abstract}

\maketitle

\section{Introduction}

A ring $R$ is called a *-ring if it is equipped with an involution $*: R \to R$, denoted by $x \mapsto x^*$, which satisfies the following properties for all $x, y \in R$ and $\lambda \in \mathbb{C}$:
$$(x+y)^*=x^*+y^*, (\lambda x)^*=\overline{\lambda}x^*,(xy)^*=y^*x^*,(x^*)^*=x.$$ In linear algebra, the concept of a matrix inverse is fundamental. However, the conventional inverse exists only for nonsingular square matrices. To handle singular or rectangular matrices, the theory of weak inverses was developed.

Among these, the Moore-Penrose inverse is the most well-known. An element $a \in R$ is said to have a Moore-Penrose inverse if there exists an element $x \in R$ such that
$$xax=x, axa=a,(ax)^*=ax, (xa)^*=xa.$$ This element $x$ is unique if it exists, and we denote it by $a^{\dagger}$. The set of all Moore-Penrose invertible elements in $R$ is denoted by $R^{\dagger}$ (see~\cite{M0,R}). Other weak inverses are tailored for specific structural or algebraic requirements.

An element $a \in R$ has a group inverse if there exists an element $x \in R$ such that
$$xax=x, axa=a, ax=xa.$$ Such an element $x$ is unique if it exists, is denoted by $a^{\#}$, and is called the group inverse of $a$. For a complex matrix, the group inverse exists if and only if the matrix has index one, meaning its algebraic and geometric multiplicities for the zero eigenvalue are equal.

The group inverse has emerged as a powerful tool in linear algebra and its numerous applications. Distinct from the more widely known Moore-Penrose inverse, the group inverse exists for a singular matrix under the specific condition that its index is one. The group inverse has been extensively studied from many points of view, including the exploration of its theoretical characteristics, the development of efficient computational algorithms, and its wide-ranging applications in fields such as Markov chains, singular differential equations, control theory, and graph theory (see~\cite{M}).

A significant challenge is extending the group inverse to higher-order settings in a way that preserves most of its key properties while ensuring the new definition applies to a broader class of ring elements. The higher-order group inverse for complex matrices was introduced and studied by Wang via the Hartwig-Spindelbock decomposition (see~\cite{W1}). Since this decomposition is not available for the broader class of objects, e.g., matrices over a ring, we are motivated to generalize the concept of the higher-order group inverse to this new context. This paper, therefore, investigates the higher-order group inverse in this general setting by developing a new methodology.

In Section 2, we introduce the higher-order group inverse for an ring element and
 characterize it by means of the solvability of equations system in a ring $R$.

 \begin{defn} An element $a\in R^{\dag}$ has the higher-order group inverse ($\mathcal{H}$-group inverse for short) if there exists $x\in aR\bigcap Ra$ such that $$xax=x, a^2xa^2=a^3, (a^2xa^*)^*=a^2xa^*, (a^*xa^2)^*=a^*xa^2.$$ If such $x$ exists, it is unique, and denote it by $a^{\mathcal{H}}$. The set of all $\mathcal{H}$-group invertible elements in $R$ is denoted by $R^{\mathcal{H}}$.\end{defn}

In Section 2, we prove that $a\in R^{\dag}$ has $\mathcal{H}$-group inverse if and only if $a^{\dag}a^3a^{\dag}\in R^{\dag}$. The representation of the $\mathcal{H}$-group inverse of a ring element is presented.

The weak group inverse for square complex matrices was first introduced and studied by Wang and Chen (see~\cite{W}). Subsequently, Zou et al. generalized this concept from complex matrices to elements of a ring with a proper involution (see~\cite{Z}). A key property of this inverse is that it characterizes ring elements that can be expressed as the sum of a group-invertible element and a nilpotent element. Consequently, the weak group inverse has been extensively studied from various perspectives~\cite{F,L,MD2,MD21,MD3,Z}. This inspires us to characterize when a ring element can be written as the sum of a higher-order group-invertible element and a nilpotent.

Let $R^{nil}$ be the set of all nilpotent elements in $R$. We adopt:

\begin{defn} An element $a\in R$ has weak $\mathcal{H}$-group inverse if there exist $x,y\in R$ such that $$a=x+y, x^*y=0, yx=0,
x\in R^{\mathcal{H}}, y\in R^{nil}.$$\end{defn}

We denote $x^{\mathcal{H}}$ by $a^{\tiny\textcircled{\tiny H}}$ and call it the weak $\mathcal{H}$-group inverse of $a$. The set of all weak $\mathcal{H}$-group invertible elements in $R$ is denoted by $R^{\tiny\textcircled{H}}$.

Recall that an element $a\in R$ has weak Moore-Penrose inverse if there exists $x\in R$ such that
$$x=xax, (ax)^*=ax, (xa)^*=xa, a-axa\in R^{nil}.$$ The preceding $x$ is unique if it exists, and we denote it by $a^{\tiny\textcircled{\dag}}$. The set of all generalized Moore-Penrose invertible elements in $R$ is denoted by $R^{\tiny\textcircled{\dag}}$ (see~\cite{C}).

In Section 3, we establish the relations between the weak $\mathcal{H}$-group inverse and weak Moore-Penrose inverse. It is proved that
$a\in R^{\tiny\textcircled{H}}$ if and only if $a\in R^{\tiny\textcircled{\dag}}$ and $a^{\tiny\textcircled{\dag}}a^3a^{\tiny\textcircled{\dag}}\in R^{\dag}$. The element-wise characterization of the weak $\mathcal{H}$-group inverse is thereby presented (see~Theorem 3.4).

Throughout the paper, all *-ring are associative with an identity. $R^{D}, R^{\dag}, R^{\tiny\textcircled{\dag}}$ and $R^{nil}$ denote the sets of all Drazin,
Moore-Penrose, weak Moore-Penrose invertible and nilpotent elements in the ring $R$, respectively. Let $a\in R$. Set $im(a)=\{ ax ~|~ x\in R\}$ and
$ker(a)=\{ x\in R ~|~ ax=0\}.$ We use $p_{im(a)}$ to denote the projection $p$ such that $im(p)=im(a)$.

\section{the $\mathcal{H}$-group inverse}

In this section, we characterize the $\mathcal{H}$-group inverse of a ring element using its Moore-Penrose inverse. Our starting point is as follows:

\begin{thm} Let $a\in R$. Then the following are equivalent:\end{thm}
\begin{enumerate}
\item [(1)] $a\in R^{\mathcal{H}}$.
\vspace{-.5mm}
\item [(2)] $a\in R^{\dag}$ and $a^{\dag}a^3a^{\dag}\in R^{\dag}$.
\end{enumerate}
In this case, $$a^{\mathcal{H}}=[a^{\dag}a^3a^{\dag}]^{\dag}.$$
\begin{proof} $(1)\Rightarrow (2)$ By hypothesis, there exists $x\in aR\bigcap Ra$ such that $$xax=x, a^2xa^2=a^3, (a^2xa^*)^*=a^2xa^*, (a^*xa^2)^*=a^*xa^2.$$
Write $x=ar=sa$ for some $r,s\in R$. We directly verify that
$$\begin{array}{rll}
x(a^{\dag}a^3a^{\dag})&=&ar(a^{\dag}a^3a^{\dag})=(aa^{\dag})ar(a^{\dag}a^3a^{\dag})\\
&=&(a^{\dag})^*a^*x(a^{\dag}a^3a^{\dag})\\
&=&(a^{\dag})^*a^*(saa^{\dag}a)a^2a^{\dag}\\
&=&(a^{\dag})^*a^*(sa)a^2a^{\dag}\\
&=&(a^{\dag})^*[a^*xa^2]a^{\dag},\\
\big(a^{\dag}a^3a^{\dag}\big)x&=&\big(a^{\dag}a^3a^{\dag}\big)ar\\
&=&(a^{\dag}a^2(aa^{\dag}a))r=(a^{\dag}a^2))x\\
&=&(a^{\dag}a^2))saa^{\dag}a=(a^{\dag}a^2))x(a^{\dag}a)^*\\
&=&a^{\dag}(a^2xa^*)(a^{\dag})^*,\\
\big(x(a^{\dag}a^3a^{\dag})\big)^*&=&x(a^{\dag}a^3a^{\dag}),\\
\big((a^{\dag}a^3a^{\dag})x\big)^*&=&(a^{\dag}a^3a^{\dag})x.\end{array}$$ Moreover, we see that
$$\begin{array}{rll}
(a^{\dag}a^3a^{\dag})x(a^{\dag}a^3a^{\dag})&=&(a^{\dag}a^3a^{\dag})ar(a^{\dag}a^3a^{\dag})\\
&=&(a^{\dag}a^2)x(a^{\dag}a^3a^{\dag})=(a^{\dag}a^2)sa(a^{\dag}a^3a^{\dag})\\
&=&(a^{\dag}a^2)x(a^2a^{\dag})=a^{\dag}(a^2xa^2)a^{\dag}\\
&=&a^{\dag}a^3a^{\dag},\\
x(a^{\dag}a^3a^{\dag})x&=&sa(a^{\dag}a^3a^{\dag})ar\\
&=&sa(a^{\dag}a^3a^{\dag})ar=s((aa^{\dag}a)a(aa^{\dag}a))r\\
&=&(sa)a(ar)=xax=x.
\end{array}$$
Therefore $a^{\dag}aa^{\dag}\in R^{\dag}$ and $(a^{\dag}aa^{\dag})^{\dag}=x$. This implies that $x$ is unique, as required.

$(2)\Rightarrow (1)$ Let $x=(a^{\dag}a^3a^{\dag})^{\dag}.$ Then $$\begin{array}{rll}
x&=&(a^{\dag}a^3a^{\dag})^{\dag}=(a^{\dag}a^3a^{\dag})^{\dag}(a^{\dag}a^3a^{\dag})(a^{\dag}a^3a^{\dag})^{\dag}\\
&=&(a^{\dag}a^3a^{\dag})^{\dag}(a^{\dag}a^3a^{\dag})(aa^{\dag})(a^{\dag}a^3a^{\dag})^{\dag}\\
&=&(aa^{\dag})(a^{\dag}a^3a^{\dag})^{\dag}\\
&\in&aR.
\end{array}$$ Likewise, we have $x\in Ra$. We further check that
$$\begin{array}{rll}
xax&=&[a^{\dag}a^3a^{\dag}]^{\dag}a[a^{\dag}a^3a^{\dag}]^{\dag}\\
&=&[a^{\dag}a^3a^{\dag}]^{\dag}(a^{\dag}a)a(aa^{\dag})[a^{\dag}a^3a^{\dag}]^{\dag}\\
&=&[a^{\dag}a^3a^{\dag}]^{\dag}[a^{\dag}a^3a^{\dag}][a^{\dag}a^3a^{\dag}]^{\dag}=x,\\
a^2xa^2&=&a^2(a^{\dag}a^3a^{\dag})^{\dag}a^2=a^3a^{\dag}[a^{\dag}a^3a^{\dag}]^{\dag}a^{\dag}a^3\\
&=&[aa^{\dag}a^3]a^{\dag}[a^{\dag}a^3a^{\dag}]^{\dag}a^{\dag}[a^3a^{\dag}a]\\
&=&a[a^{\dag}a^3a^{\dag}][a^{\dag}a^3a^{\dag}]^{\dag}[a^{\dag}a^3a^{\dag}]a\\
&=&a[a^{\dag}a^3a^{\dag}]a=a^3,\\
a^2xa^*&=&a^2[a^{\dag}a^3a^{\dag}]^{\dag}a^*=a^3a^{\dag}[a^{\dag}a^3a^{\dag}]^{\dag}a^*\\
&=&a[a^{\dag}a^3a^{\dag}][a^{\dag}a^3a^{\dag}]^{\dag}a^*\\
a^*xa^2&=&a^*[a^{\dag}a^3a^{\dag}]^{\dag}a^2=a^*[a^{\dag}a^3a^{\dag}]^{\dag}a^{\dag}a^3\\
&=&a^*[a^{\dag}a^3a^{\dag}]^{\dag}[a^{\dag}a^3a^{\dag}]a,\\
(a^2xa^*)^*&=&a^2xa^*,\\
(a^*xa^2)^*&=&a^*xa^2.
\end{array}$$
Therefore $x\in aR\bigcap Ra$ is the solution of the system of equations: $$xax=x, a^2xa^2=a^3, (a^2xa^*)^*=a^2xa^*, (a^*xa^2)^*=a^*xa^2,$$ as required.
\end{proof}

\begin{cor} Let $a\in R^{\dag}\bigcap R^{\#}$. Then
$a\in R^{\mathcal{H}}$ and $a^{\mathcal{H}}=a^{\#}$.\end{cor}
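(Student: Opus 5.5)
The plan is to check directly that $x=a^{\#}$ satisfies the defining conditions of the $\mathcal{H}$-group inverse, and then invoke the uniqueness already established. Uniqueness comes out of the proof of the preceding theorem: any solution $x$ equals $(a^{\dag}a^3a^{\dag})^{\dag}$. An alternative route would be to show directly that $a^{\#}$ is the Moore--Penrose inverse of $b=a^{\dag}a^3a^{\dag}$ and then apply the preceding theorem. The defining equations are simpler to handle, so I will verify those.

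\textbf{Step 1: membership and the two algebraic equations.} Since $a^{\#}=a^{\#}aa^{\#}=a(a^{\#})^2=(a^{\#})^2a$, we have $a^{\#}\in aR\cap Ra$. Next, $a^{\#}aa^{\#}=a^{\#}$ holds by definition. Finally, $a^2a^{\#}a^2=a(aa^{\#}a)a=a^3$.

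\textbf{Step 2: the two Hermitian conditions.} Using $aa^{\#}=a^{\#}a$, we get $a^2a^{\#}=a(aa^{\#})=aa^{\#}a=a$. Hence $a^2a^{\#}a^*=aa^*$, which is self-adjoint. Symmetrically, $a^{\#}a^2=a$, so $a^*a^{\#}a^2=a^*a$, which is also self-adjoint.

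\textbf{Step 3: conclusion.} Since $a\in R^{\dag}$ and $x=a^{\#}$ solves the system, we get $a\in R^{\mathcal{H}}$. By the uniqueness from the preceding theorem, $a^{\mathcal{H}}=a^{\#}$.

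I expect no real obstacle here. The only point needing care is Step 2, where the key identity is $a^2a^{\#}=a=a^{\#}a^2$; it follows from commutativity together with $aa^{\#}a=a$.
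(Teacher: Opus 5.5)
Your proof is correct, but it uses Theorem 2.1 from the other side than the paper does.

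The paper takes exactly the alternative you set aside. It computes $a^{\#}(a^{\dag}a^3a^{\dag})=aa^{\dag}$ and $(a^{\dag}a^3a^{\dag})a^{\#}=a^{\dag}a$, which give the two Hermitian Penrose conditions. It then checks the remaining Penrose equations to get $a^{\#}=(a^{\dag}a^3a^{\dag})^{\dag}$. Finally it reads off $a^{\mathcal{H}}=a^{\#}$ from the implication $(2)\Rightarrow(1)$ together with the formula $a^{\mathcal{H}}=[a^{\dag}a^3a^{\dag}]^{\dag}$.

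You instead verify the defining system for $a^{\#}$ directly and appeal only to uniqueness, i.e.\ to the $(1)\Rightarrow(2)$ argument showing that every solution equals $(a^{\dag}a^3a^{\dag})^{\dag}$. Your verification is shorter and more transparent. The identity $a^2a^{\#}=a=a^{\#}a^2$ collapses the two Hermitian conditions to $aa^*$ and $a^*a$. It also shows that $a^{\dag}$ plays no role in the equations themselves: the hypothesis $a\in R^{\dag}$ is needed only because the definition demands it and because uniqueness is proved through $a^{\dag}$.

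The paper's route yields a bit more, namely that $a^{\#}$ is the Moore--Penrose inverse of $a^{\dag}a^3a^{\dag}$, with associated projections $aa^{\dag}$ and $a^{\dag}a$. On the other hand, it requires all four Penrose identities, and its displayed check of the last one is misprinted. The intended identity $a^{\#}(a^{\dag}a^3a^{\dag})a^{\#}=aa^{\dag}a^{\#}=a^{\#}$ follows at once from $a^{\#}\in aR$, and your approach avoids this step entirely.
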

\begin{proof} We verify that $$\begin{array}{rll}
a^{\#}[a^{\dag}a^3a^{\dag}]&=&[(a^{\#})^2a][a^{\dag}a^3a^{\dag}]\\
&=&(a^{\#})^2(aa^{\dag}a)a^2a^{\dag}\\
&=&(a^{\#})^2a^3a^{\dag}=aa^{\dag}.
\end{array}$$ Likewise, we have
$[a^{\dag}a^3a^{\dag}]a^{\#}=a^{\dag}a$. Hence
$$[a^{\#}(a^{\dag}a^3a^{\dag})]^*=a^{\#}(a^{\dag}a^3a^{\dag}), [(a^{\dag}a^3a^{\dag})a^{\#}]^*=(a^{\dag}a^3a^{\dag})a^{\#}.$$
Moreover, we check that $$\begin{array}{rll}
[a^{\dag}a^3a^{\dag}]a^{\#}[a^{\dag}a^3a^{\dag}]&=&[a^{\dag}a^3a^{\dag}][aa^{\dag}]=a^{\dag}a^3a^{\dag},\\
(a^3a^{\dag})a^{\#}(a^3a^{\dag})&=&[aa^{\dag}](a^3a^{\dag})=a^3a^{\dag}.
\end{array}$$
By virtue of Theorem 2.1, $a^{\mathcal{H}}=[a^{\dag}a^3a^{\dag}]^{\dag}=a^{\#}$, as asserted.\end{proof}

We come now to establish the representation of the $\mathcal{H}$-group inverse by using certain projections.

\begin{thm} Let $a\in R^{\mathcal{H}}$. The system given by
$$ax=a[q_aap_a]^{\dag}, im(x)\subseteq im(p_aa^*q_a)$$ is consistent and its unique solution is $x=a^{\mathcal{H}}.$
\end{thm}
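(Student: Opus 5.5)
Throughout I read the notation as $p_a=aa^{\dag}$ (the projection $p_{im(a)}$) and $q_a=a^{\dag}a$; if the paper's intended projections differ, this identification is the first thing to check. The plan is to reduce everything to Theorem 2.1 through the element $b:=q_aap_a$. With these projections,
$$b=q_aap_a=a^{\dag}a\cdot a\cdot aa^{\dag}=a^{\dag}a^3a^{\dag}.$$
Since $a\in R^{\mathcal{H}}$, Theorem 2.1 gives $b\in R^{\dag}$ and $a^{\mathcal{H}}=b^{\dag}$. Moreover, because $p_a$ and $q_a$ are projections,
$$b^*=p_aa^*q_a.$$
So the system reads $ax=ab^{\dag}$ with $im(x)\subseteq im(b^*)$.

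\emph{Existence.} I would take $x=b^{\dag}=a^{\mathcal{H}}$. The first equation then holds trivially. For the range condition, I use the standard identity $b^{\dag}=b^{\dag}(b^{\dag})^*b^*=b^*(b^{\dag})^*b^{\dag}$. This gives $b^{\dag}\in b^*R$, so $im(b^{\dag})\subseteq im(b^*)$.

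\emph{Uniqueness.} Suppose $x$ and $y$ are two solutions and put $z=x-y$. Then $az=0$ and $z=b^*t$ for some $t\in R$. The steps are as follows.

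First, I show $z\in im(aa^{\dag})$. Indeed $b^*=(a^{\dag})^*(a^*)^3(a^{\dag})^*$, and $(a^{\dag})^*=(aa^{\dag})^*(a^{\dag})^*=aa^{\dag}(a^{\dag})^*$. Hence $b^*=aa^{\dag}b^*$, and therefore $z=aa^{\dag}z$.

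Second, I compute $bz=a^{\dag}a^3a^{\dag}z=a^{\dag}a^3a^{\dag}(aa^{\dag}z)$. Using $a^{\dag}aa^{\dag}=a^{\dag}$ this equals $a^{\dag}a^2(a^{\dag}a\,a)\,\cdots$; more directly, $a^3a^{\dag}aa^{\dag}z=a^3a^{\dag}z$, and $a^{\dag}z$ can be rewritten so that the whole expression becomes $a^{\dag}a^2(az')$ for some $z'$. The cleanest route is to note $a^{\dag}a^3a^{\dag}z=a^{\dag}a^2\,(aa^{\dag}z)=a^{\dag}a^2z$, which vanishes because $az=0$.

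Third, I conclude. Since $bb^*t=bz=0$, the identity $b^*=b^{\dag}bb^*$ gives $z=b^*t=b^{\dag}bb^*t=0$. Hence $x=y=a^{\mathcal{H}}$.

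The only delicate point is the bookkeeping in the second step. One must verify that the middle factor $a^{\dag}a^3a^{\dag}$ really collapses to $a^{\dag}a^2$ on elements of $im(aa^{\dag})$. This follows from $a^3a^{\dag}\cdot aa^{\dag}=a^3a^{\dag}$, together with $a^{\dag}(aa^{\dag}z)$ lying in the range where the factor $a$ can be pulled out. I expect this to be the main, though routine, check. Everything else is immediate from Theorem 2.1 and standard Moore–Penrose identities.
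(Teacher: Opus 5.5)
Your proof is correct and follows essentially the paper's route. Existence comes from $b^{\dag}=b^*(b^{\dag})^*b^{\dag}\in b^*R$, and uniqueness from $b^*=b^{\dag}bb^*$, $b=q_aap_a$ and $p_ab^*=b^*$; the paper applies these to a single solution to get $x=b^{\dag}bx=b^{\dag}q_a(ax)=b^{\dag}q_aab^{\dag}=b^{\dag}$, while you apply them to the difference of two solutions. Your explicit check that $p_az=z$ fills in a step the paper uses tacitly, and the ``delicate point'' you flag is only the regrouping $a^{\dag}a^3a^{\dag}=(a^{\dag}a^2)(aa^{\dag})$, so nothing further is needed.
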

\begin{proof} Set $x=a^{\mathcal{H}}$. Then $ax=a[a^{\dag}a^3a^{\dag}]^{\dag}=a[q_aap_a]^{\dag}$ and $x=[a^{\dag}a^3a^{\dag}]^{\dag}=[a^{\dag}a^3a^{\dag}]^{\dag}[a^{\dag}a^3a^{\dag}][a^{\dag}a^3a^{\dag}]^{\dag}\in [q_aap_a]^*R$; hence, $im(x)\subseteq  [q_aap_a]^*R$.

Suppose that $$ax=a[q_aap_a]^{\dag}, im(x)\subseteq im(p_aa^*q_a).$$ Since $p_aa^*q_a=(q_aap_a)^*=[(q_aap_a)(q_aap_a)^{\dag}(q_aap_a)]^*=(q_aap_a)^{\dag}(q_aap_a)(q_aap_a)^*$. Write $x=(q_aap_a)^{\dag}z$ for a $z\in R$.
Then we verify that $$\begin{array}{rll}
x&=&(q_aap_a)^{\dag}z\\
&=&(q_aap_a)^{\dag}(q_aap_a)(q_aap_a)^{\dag}z\\
&=&(q_aap_a)^{\dag}q_a(ax)\\
&=&(q_aap_a)^{\dag}q_aa[q_aap_a]^{\dag}\\
&=&(q_aap_a)^{\dag}[q_aap_a][q_aap_a]^{\dag}\\
&=&(q_aap_a)^{\dag},
\end{array}$$ as asserted.\end{proof}

\begin{cor} Let $a\in R^{\mathcal{H}}$. The system given by
$$p_ax=[q_aap_a]^{\dag}, im(x)\subseteq im(a)$$ is consistent and its unique solution is $x=a^{\mathcal{H}}.$
\end{cor}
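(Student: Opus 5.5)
The plan is to read the notation as in the proof of Theorem 2.3, namely $q_a=a^{\dag}a$ and $p_a=aa^{\dag}$. With this reading,
$$q_aap_a=a^{\dag}a\,a\,aa^{\dag}=a^{\dag}a^3a^{\dag},$$
so by Theorem 2.1 we have $[q_aap_a]^{\dag}=a^{\mathcal{H}}$. The corollary then reduces to two observations: $a^{\mathcal{H}}$ lies in $aR$, and $p_a$ acts as the identity on left multiples of $a$.

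\emph{Consistency.} Set $x=a^{\mathcal{H}}=[a^{\dag}a^3a^{\dag}]^{\dag}$. The $(2)\Rightarrow(1)$ part of the proof of Theorem 2.1 already shows that $x=(aa^{\dag})[a^{\dag}a^3a^{\dag}]^{\dag}$. This was obtained by inserting $aa^{\dag}$, using $a^{\dag}a^3a^{\dag}\,aa^{\dag}=a^{\dag}a^3a^{\dag}$ inside $x=x(a^{\dag}a^3a^{\dag})x$. It gives two things at once:
\begin{enumerate}
\item [(i)] $im(x)\subseteq im(a)$;
\item [(ii)] $p_ax=aa^{\dag}x=x=[q_aap_a]^{\dag}$, since $p_a$ is idempotent.
\end{enumerate}
So $x=a^{\mathcal{H}}$ solves the system.

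\emph{Uniqueness.} Suppose $x$ satisfies $p_ax=[q_aap_a]^{\dag}$ and $im(x)\subseteq im(a)$. Write $x=az$ for some $z\in R$. Then
$$p_ax=aa^{\dag}az=az=x,$$
hence $x=p_ax=[q_aap_a]^{\dag}=a^{\mathcal{H}}$ by Theorem 2.1.

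There is no real obstacle here. The only point needing care is to fix the convention for $p_a$ and $q_a$ so that it matches the identity $a[a^{\dag}a^3a^{\dag}]^{\dag}=a[q_aap_a]^{\dag}$ used in Theorem 2.3. Everything else is a one-line manipulation with $aa^{\dag}a=a$.
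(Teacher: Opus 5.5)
Your proof is correct and is essentially the paper's argument: both rest on $p_aa^{\mathcal{H}}=a^{\mathcal{H}}=[q_aap_a]^{\dag}$ and on the fact that $p_a=aa^{\dag}$ fixes every element of $aR$. The paper phrases uniqueness as $x_1-x_2\in ker(p_a)\cap im(a)=0$ and gets $im(a^{\mathcal{H}})\subseteq im(a)$ from Theorem 2.3, while you argue directly that any solution equals $a^{\mathcal{H}}$.

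One caution about your justification of $x=aa^{\dag}x$. Inserting $aa^{\dag}$ into $x=x(a^{\dag}a^3a^{\dag})x$ only produces $x(a^{\dag}a^3a^{\dag})aa^{\dag}x$, not $aa^{\dag}x$. So justify $a^{\mathcal{H}}\in aR$ in one of two ways. You can take it straight from the definition, since $a^{\mathcal{H}}\in aR\cap Ra$. Alternatively, set $y=a^{\dag}a^3a^{\dag}$, so that $y=y\,aa^{\dag}$ and hence $y^*=aa^{\dag}y^*$. Then $x=(xy)x=(xy)^*x=y^*x^*x$.
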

\begin{proof} One easily checks that $$p_aa^{\mathcal{H}}=
aa^{\dag}[a^{\dag}a^3a^{\dag}]^{\dag}=[a^{\dag}a^3a^{\dag}]^{\dag}=(q_aap_a)^{\dag}.$$ In view of Theorem 2.3,
$$im(a^{\mathcal{H}})\subseteq im(p_aa^*q_a)\subseteq im(a).$$

Suppose that $p_ax_i=(q_aap_a)^{\dag}, im(x_i)\subseteq im(a)$ for $i=1,2$. Then
$p_ax_1=p_ax_2$; hence, $x_1-x_2\in ker(p_a)\bigcap im(a)=0$. Therefore $x_1=x_2$, as asserted.\end{proof}

Let $a,b,c\in R$. An element $a$ has $(b,c)$-inverse provide that there exists $x\in R$ such that $$xab=b, cax=c ~\mbox{and}~ x\in bRx\bigcap xRc.$$ If such $x$ exists, it is unique and denote it by $a^{(b,c)}$ (see~\cite{D}).

\begin{thm} Let $a\in R^{\mathcal{H}}$. Then
$$a^{\mathcal{H}}=a^{\big(p_a(a^*)^2, (a^*)^2q_a\big)}.$$\end{thm}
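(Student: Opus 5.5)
The plan is to set $w=a^{\dag}a^3a^{\dag}$. By Theorem 2.1, $x:=a^{\mathcal{H}}=w^{\dag}$. I read $p_a=aa^{\dag}$ and $q_a=a^{\dag}a$, consistent with $q_aap_a=a^{\dag}a^3a^{\dag}$ in Theorem 2.3. I will check directly the defining conditions of the $(b,c)$-inverse,
$$xab=b,\quad cax=c,\quad x\in bRx\cap xRc,$$
with $b=p_a(a^*)^2$ and $c=(a^*)^2q_a$. Uniqueness of the $(b,c)$-inverse then gives the equality.

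\textbf{Step 1: rewrite $b$ and $c$ through $w^*$.} Since $aa^{\dag}=(aa^{\dag})^*=(a^{\dag})^*a^*$, we get $b=(a^{\dag})^*(a^*)^3$. Using $a^*aa^{\dag}=a^*$, this gives
$$w^*a^*=(a^{\dag})^*(a^*)^3(a^{\dag})^*a^*=(a^{\dag})^*(a^*)^3aa^{\dag}=(a^{\dag})^*(a^*)^3=b.$$
The same computation yields $b(a^{\dag})^*=w^*$. Symmetrically, $c=(a^*)^3(a^{\dag})^*$, and
$$c=a^*w^*,\qquad w^*=(a^{\dag})^*c.$$
These identities carry the whole proof. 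I expect finding them to be the main obstacle: a first attempt tries to remove the projection $p_a$ from $b$, which fails, and the right move is to absorb $p_a$ into $(a^{\dag})^*$.

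\textbf{Step 2: the two equations.} By the proof of Theorem 2.1, $x\in aR\cap Ra$, so $x=xa^{\dag}a=aa^{\dag}x$. Then
$$xa\,aa^{\dag}=xa^{\dag}a^3a^{\dag}=xw=w^{\dag}w,$$
hence
$$xab=w^{\dag}w\,(a^*)^2=w^{\dag}ww^*a^*=w^*a^*=b,$$
using $w^{\dag}ww^*=w^*$. In the same way, $c=ca^{\dag}a$ and $x=aa^{\dag}x$ give $cax=ca^{\dag}a\,a\,aa^{\dag}x=cwx=cww^{\dag}$. Since $c=a^*w^*$ and $w^*ww^{\dag}=w^*$, this equals $c$.

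\textbf{Step 3: the membership conditions.} Use the standard identities
$$w^{\dag}=w^*(w^{\dag})^*w^{\dag}=w^{\dag}(w^{\dag})^*w^*,$$
together with $w^{\dag}=w^{\dag}ww^{\dag}$. Substituting $w^*=b(a^{\dag})^*$ into the first gives
$$x=b\,\big[(a^{\dag})^*(w^{\dag})^*w^{\dag}w\big]\,x\in bRx.$$
Substituting $w^*=(a^{\dag})^*c$ into the second gives
$$x=x\,\big[ww^{\dag}(w^{\dag})^*(a^{\dag})^*\big]\,c\in xRc.$$
Hence $x$ is the $(b,c)$-inverse of $a$, that is, $a^{\mathcal{H}}=a^{(p_a(a^*)^2,(a^*)^2q_a)}$.
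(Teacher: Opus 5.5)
Your proof is correct and follows essentially the same route as the paper: both verify the four $(b,c)$-inverse conditions directly for $x=w^{\dag}$, with $w=a^{\dag}a^3a^{\dag}$. Both rest on $b=w^*a^*$ and $c=a^*w^*$, which the paper uses implicitly as $(a^3a^{\dag})^*=w^*a^*$ and $(a^{\dag}a^3)^*=a^*w^*$, and on $w^{\dag}=w^*(w^{\dag})^*w^{\dag}=w^{\dag}(w^{\dag})^*w^*$. Your factorizations $w^*=b(a^{\dag})^*=(a^{\dag})^*c$ make the paper's bare claims $x\in bRx\cap xRc$ more transparent. The one unstated link is $w^{\dag}w(a^*)^2=w^{\dag}w\,w^*a^*$, which holds because $w=w\,aa^{\dag}$ gives $w^{\dag}w(a^*)^2=w^{\dag}w\,b$.
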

\begin{proof} Obviously, we have $a\in R^{\dag}$. Let $x=a^{\mathcal{H}}$. We check that
$$\begin{array}{rll}
x&=&[a^{\dag}a^3a^{\dag}]^{\dag}=[a^{\dag}a^3a^{\dag}]^{\dag}[a^{\dag}a^3a^{\dag}][a^{\dag}a^3a^{\dag}]^{\dag}\\
&=&[a^{\dag}a^3a^{\dag}]^*\big([a^{\dag}a^3a^{\dag}]^{\dag}\big)^*x\in p_a(a^*)^2Rx,\\
x&=&x\big([a^{\dag}a^3a^{\dag}][a^{\dag}a^3a^{\dag}]^{\dag}\big)^*=x\big([a^{\dag}a^3a^{\dag}]^{\dag}\big)^*[a^{\dag}a^3a^{\dag}]^*]\\
&\in& xR(a^*)^2q_a,\\
xa[aa^{\dag}(a^*)^2]&=&[a^{\dag}a^3a^{\dag}]^{\dag}a[aa^{\dag}(a^*)^2]\\
&=&[a^{\dag}a^3a^{\dag}]^{\dag}a(a^3a^{\dag})^*\\
&=&[a^{\dag}a^3a^{\dag}]^{\dag}a[a^{\dag}a^3a^{\dag}]^*a^*\\
&=&[a^{\dag}a^3a^{\dag}]^{\dag}[a^{\dag}a]a[aa^{\dag}][a^{\dag}a^3a^{\dag}]^*a^*\\
&=&[a^{\dag}a^3a^{\dag}]^{\dag}[a^{\dag}a^3a^{\dag}][a^{\dag}a^3a^{\dag}]^*a^*\\
&=&\big([a^{\dag}a^3a^{\dag}]a^{\dag}a^3a^{\dag}]^{\dag}[a^{\dag}a^3a^{\dag}]\big)^*a^*\\
&=&\big(a^{\dag}a^3a^{\dag}\big)^*a^*\\
&=&(a^3a^{\dag})^*=aa^{\dag}(a^*)^2,\\
(a^*)^2a^{\dag}a^2x&=&(a^*)^2a^{\dag}a^2[a^{\dag}a^3a^{\dag}]^{\dag}\\
&=&(a^*)^2a^{\dag}a^2[aa^{\dag}][a^{\dag}a^3a^{\dag}]^{\dag}\\
&=&(a^*)^2[a^{\dag}a][a^{\dag}a^3a^{\dag}][a^{\dag}a^3a^{\dag}]^{\dag}\\
&=&[a^{\dag}a^3]^*\big([a^{\dag}a^3a^{\dag}][a^{\dag}a^3a^{\dag}]^{\dag}\big)^*\\
&=&\big([a^{\dag}a^3a^{\dag}][a^{\dag}a^3a^{\dag}]^{\dag}[a^{\dag}a^3]\big)^*\\
&=&\big([a^{\dag}a^3a^{\dag}][a^{\dag}a^3a^{\dag}]^{\dag}[a^{\dag}a^3a^{\dag}]a\big)^*\\
&=&\big([a^{\dag}a^3a^{\dag}]a\big)^*\\
&=&\big(a^{\dag}a^3\big)^*=(a^*)^2a^{\dag}a.
\end{array}$$ Therefore $$a^{\mathcal{H}}=a^{\big(p_a(a^*)^2, (a^*)^2q_a\big)},$$ as asserted.
\end{proof}

Let $a\in R$. We say that $a$ has $\{2\}$-inverse $x$ provided that $x=xax$. We denote $a^{(2)}_{T,S}=\{ x\in R~|~xax=x, im(a)=T, ker(a)=S\}$. We next consider the relation between the $\mathcal{H}$-group inverse and $\{2\}$-inverse in a ring.

\begin{thm} Let $a\in R^{\mathcal{H}}$. Then $$a^{\mathcal{H}}=a^{(2)}_{im\big(p_a(a^*)^2\big), ker\big((a^*)^2q_a\big)}.$$\end{thm}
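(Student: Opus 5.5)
The approach is to read the definition of $a^{(2)}_{T,S}$ in the intended sense: $x$ satisfies $xax=x$, $im(x)=T$ and $ker(x)=S$. The typo "$im(a)$, $ker(a)$" must refer to $x$, since otherwise the condition says nothing about $x$. Throughout, $p_a=aa^{\dag}$ and $q_a=a^{\dag}a$, as used in Theorems 2.3 and 2.5. We then show that $x=a^{\mathcal{H}}=[a^{\dag}a^3a^{\dag}]^{\dag}$ satisfies these three conditions with $T=im(p_a(a^*)^2)$ and $S=ker((a^*)^2q_a)$. Uniqueness of a $\{2\}$-inverse with prescribed image and kernel is standard: if $x,y$ are two such, then $xa$ and $ya$ are idempotents with the same image $T$, and $ax$, $ay$ share the kernel $S$. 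This gives $x=xax=yax=yay=y$, using $y=ya y$, $x\in yR$ and $ker$ equality.

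First, $xax=x$ is part of Definition 1.1 (or of Theorem 2.1). The image and kernel are where the work lies, and the plan is to extract both from the computations already done in Theorem 2.5.

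For the image, Theorem 2.5 gives $x\in p_a(a^*)^2Rx$, so $im(x)\subseteq im(p_a(a^*)^2)$. Conversely, Theorem 2.5 also gives $xa\,p_a(a^*)^2=p_a(a^*)^2$, so $p_a(a^*)^2=x\,(a\,p_a(a^*)^2)\in xR$. Hence $im(p_a(a^*)^2)\subseteq im(x)$.

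For the kernel, we again use Theorem 2.5. From $x\in xR(a^*)^2q_a$ we get: if $(a^*)^2q_az=0$ then $xz=0$, so $ker((a^*)^2q_a)\subseteq ker(x)$. Conversely, Theorem 2.5 gives $(a^*)^2q_a\,a\,x=(a^*)^2q_a$; the computation there reads $(a^*)^2a^{\dag}a^2x=(a^*)^2a^{\dag}a$, and $a^{\dag}a^2=q_aa$. So if $xz=0$, then $(a^*)^2q_az=(a^*)^2q_a\,a\,xz=0$, giving $ker(x)\subseteq ker((a^*)^2q_a)$.

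The main obstacle is merely bookkeeping: making sure the identities from Theorem 2.5 ($x\in bRx\cap xRc$, $xab=b$, $cax=c$ with $b=p_a(a^*)^2$, $c=(a^*)^2q_a$) are exactly what is needed, so that the statement reduces to the general fact that $a^{(b,c)}=a^{(2)}_{im(b),ker(c)}$. I would phrase the proof that way: $x\in bRx$ and $b=xab$ give $im(x)=im(b)$, and $x\in xRc$ and $c=cax$ give $ker(x)=ker(c)$. Then Theorem 2.5 finishes it.
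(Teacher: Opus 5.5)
Your proof is correct, but it is organized differently from the paper's.

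The paper never invokes Theorem 2.5. With $m=a^{\dag}a^3a^{\dag}=q_aap_a$ and $x=m^{\dag}$, it recomputes everything directly:
\begin{enumerate}
\item[(i)] One image inclusion comes from $x=(m^{\dag}m)^*m^{\dag}=p_a(a^*)^2(m^{\dag}a^{\dag})^*m^{\dag}$.
\item[(ii)] The other comes from $a^3a^{\dag}=amxm$ and the self-adjointness of $xm$, which give $p_a(a^*)^2=x\,mm^*a^*$.
\item[(iii)] For the kernel, it factors $x$ through $(a^*)^2q_a$ and uses $(a^*)^2q_a=(ma)^*=a^*m^*mx$.
\end{enumerate}

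You instead recognize the statement as a formal consequence of Theorem 2.5. The key is the general fact that the defining conditions $x\in bRx\cap xRc$, $xab=b$, $cax=c$ force $xR=bR$ and $ker(x)=ker(c)$, so that $a^{(b,c)}=a^{(2)}_{bR,\,ker(c)}$. This makes Theorem 2.6 a genuine corollary, and the argument is shorter and more transparent.

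Your uniqueness argument for a $\{2\}$-inverse with prescribed image and kernel is something the paper omits. It is what actually justifies writing ``$=$'' with the set-valued notation $a^{(2)}_{T,S}$. You are also right that $im(a),ker(a)$ in that definition should read $im(x),ker(x)$.

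Your route has one more practical advantage. In the paper's first kernel computation, the factor $(p_a[q_aap_a]^{\dag})^*$ should be $(a^{\dag}[q_aap_a]^{\dag})^*$, because $(q_aap_a)^*=(a^{\dag})^*(a^*)^2q_a$. As printed, that displayed identity already fails for the $4\times 4$ nilpotent Jordan block: the left side is $m^{\dag}\neq 0$ while the right side vanishes. By contrast, the inclusion $x\in xR(a^*)^2q_a$ that you borrow from Theorem 2.5 is correctly derived there.

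The paper's direct approach, in exchange, is self-contained and does not depend on the $(b,c)$-inverse framework.
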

\begin{proof} Let $x=a^{\mathcal{H}}$. Then we have $x=xax$.

Step 1. $im(x)=im\big(p_a(a^*)^2\big)$. In view of Theorem 2.1, we have

$$\begin{array}{rll}
x&=&[a^{\dag}a^3a^{\dag}]^{\dag}\\
&=&[a^{\dag}a^3a^{\dag}]^{\dag}[a^{\dag}a^3a^{\dag}][a^{\dag}a^3a^{\dag}]^{\dag}\\
&=&\big([a^{\dag}a^3a^{\dag}]^{\dag}[a^{\dag}a^3a^{\dag}]]\big)^*[a^{\dag}a^3a^{\dag}]^{\dag}\\
&=&[aa^{\dag}(a^*)^2]\big([a^{\dag}a^3a^{\dag}]^{\dag}a^{\dag}\big)^*[a^{\dag}a^3a^{\dag}]^{\dag}\\
&\in&im\big(aa^{\dag}(a^*)^2\big),
\end{array}$$
$$\begin{array}{rll}
[aa^{\dag}(a^*)^2]^*&=&a^3a^{\dag}=a[a^{\dag}a^3a^{\dag}]x[a^{\dag}a^3a^{\dag}]\\
&=&a[a^{\dag}a^3a^{\dag}][a^{\dag}a^3a^{\dag}]^*x^*.
\end{array}$$
Hence, $aa^{\dag}(a^*)^2=x[a^{\dag}a^3a^{\dag}]\big(a[a^{\dag}a^3a^{\dag}]\big)^*\in im(x)$.
Therefore $im(x)=im\big(p_a(a^*)^2\big)$.

Step 2. $ker(x)=ker\big((a^*)^2q_a\big)$. If $(a^*)^2q_ar=0$ for some $r\in R$, then $$\begin{array}{rll}
xr&=&[q_aap_a]^{\dag}r\\
&=&[q_aap_a]^{\dag}[q_aap_a][q_aap_a]^{\dag}r\\
&=&[q_aap_a]^{\dag}\big([q_aap_a][q_aap_a]^{\dag}\big)^*r\\
&=&[q_aap_a]^{\dag}\big(p_a[q_aap_a]^{\dag}\big)^*[(a^2)^*q_ar]\\
&=&0.
\end{array}$$ Hence, $r\in ker(x)$.

If $x(r)=0$, then $$\begin{array}{rll}
(a^2)^*q_ar&=&[a^{\dag}a^3]^*r=[(a^{\dag}a^3a^{\dag})a]^*r=a^*[a^{\dag}a^3a^{\dag}]^*r\\
&=&a^*[(a^{\dag}a^3a^{\dag})(a^{\dag}a^3a^{\dag})^{\dag}(a^{\dag}a^3a^{\dag})]^*r\\
&=&a^*[(a^{\dag}a^3a^{\dag})]^*[(a^{\dag}a^3a^{\dag})(a^{\dag}a^3a^{\dag})^{\dag}]^*r\\
&=&a^*[(a^{\dag}a^3a^{\dag})]^*[a^{\dag}a^3a^{\dag}](xr)=0.
\end{array}$$ Thus, $r\in ker\big((a^2)^*q_a\big)$. Hence $ker(x)=ker\big((a^*)^2q_a\big)$.

Therefore we complete the proof.\end{proof}

\section{weak $\mathcal{H}$-group inverse}

The purpose of this section is to determine under what conditions a ring element can be decomposed into the sum of an
$\mathcal{H}$-group invertible element and a nilpotent element. To this end, we first establish the connection between the weak higher-order group inverse and the weak Moore-Penrose inverse.

\begin{thm} Let $a\in R$. Then the following are equivalent:\end{thm}
\begin{enumerate}
\item [(1)] $a\in R^{\tiny\textcircled{H}}$.
\vspace{-.5mm}
\item [(2)] $a\in R^{\tiny\textcircled{\dag}}$ and $a^{\tiny\textcircled{\dag}}a^3a^{\tiny\textcircled{\dag}}\in R^{\dag}$.
\end{enumerate}
In this case, $$a^{\tiny\textcircled{H}}=[a^{\tiny\textcircled{\dag}}a^3a^{\tiny\textcircled{\dag}}]^{\dag}.$$
\begin{proof} $(1)\Rightarrow (2)$ Since $a\in R^{\tiny\textcircled{H}}$, there exist $x,y\in R$ such that $$a=x+y, x^*y=xy^*=0, x\in R^{\mathcal{H}}, y\in R^{nil}.$$ As $x\in R^{\mathcal{H}}$, we have $x\in R^{\dag}$. This implies that
$a\in R^{\tiny\textcircled{\dag}}$ and $a^{\tiny\textcircled{\dag}}=x^{\dag}$ by ~\cite[Lemma 4.1]{C}.
It is easy to verify that $$\begin{array}{rll}
a^{\tiny\textcircled{\dag}}a^3a^{\tiny\textcircled{\dag}}&=&x^{\dag}(x+y)^3x^{\dag}=x^{\dag}(x+y)^3x^{\dag}xx^{\dag}\\
&=&x^{\dag}(x+y)^3(x^{\dag}x)^*x^{\dag}=x^{\dag}x^3x^{\dag}\\\
&\in&R^{\dag}.
\end{array}$$
Moreover, we check that $$\begin{array}{rll}
a^{\tiny\textcircled{H}}&=&x^{\mathcal{H}}=[x^{\dag}x^3x^{\dag}]^{\dag}\\
&=&[a^{\tiny\textcircled{\dag}}a^3a^{\tiny\textcircled{\dag}}]^{\dag},
\end{array}$$ as required.

$(2)\Rightarrow (1)$ Since $a\in R^{\tiny\textcircled{\dag}}$, there exist $x,y\in R$ such that $$a=x+y, x^*y=yx=0, x\in R^{\dag}, y\in R^{nil}.$$
In this case, $a^{\tiny\textcircled{\dag}}=x^{\dag}$. Moreover, we have
$$\begin{array}{rll}
x^{\dag}x^3x^{\dag}&=&x^{\dag}(x+y)^3x^{\dag}\\
&=&a^{\tiny\textcircled{\dag}}a^3a^{\tiny\textcircled{\dag}}\\
&\in&R^{\dag}.
\end{array}$$ Hence, $x\in R^{\mathcal{H}}$. Accordingly, $a\in R^{\tiny\textcircled{H}}$.\end{proof}

\begin{cor} Let $a\in R$. Then the following are equivalent:\end{cor}
\begin{enumerate}
\item [(1)] $a\in R^{\tiny\textcircled{H}}$.
\vspace{-.5mm}
\item [(2)] The system of conditions $$x=xax, (ax)^*=ax, (xa)^*=xa, xa^3x\in R^{\dag}, a-axa\in R^{nil}$$ is consistent and it has the unique solution.
\end{enumerate}
In this case, $a^{\tiny\textcircled{H}}=(xa^3x)^{\dag}.$
\begin{proof} $(1)\Rightarrow (2)$ Set $x=a\in R^{\tiny\textcircled{H}}$. The implication is true by Theorem 3.1.

$(2)\Rightarrow (1)$ By assumption, we have $a\in R^{\tiny\textcircled{\dag}}$ and $a^{\tiny\textcircled{\dag}}a^3a^{\tiny\textcircled{\dag}}=xa^3x\in R^{\dag}$. This completed the proof by Theorem 3.1.\end{proof}

\begin{cor} Let $a,b\in R^{\tiny\textcircled{H}}$. If $ab=ba=0, a^*b=0$, then $a+b\in R^{\tiny\textcircled{H}}$. In this case, $$(a+b)^{\tiny\textcircled{H}}=a^{\tiny\textcircled{H}}+b^{\tiny\textcircled{H}}.$$\end{cor}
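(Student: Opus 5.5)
We decompose $a$ and $b$ separately and glue the pieces. By the definition of the weak $\mathcal{H}$-group inverse, write $a=x_1+y_1$ and $b=x_2+y_2$, where
$$x_i^*y_i=0,\quad y_ix_i=0,\quad x_i\in R^{\mathcal{H}},\quad y_i\in R^{nil}.$$
Set $x=x_1+x_2$ and $y=y_1+y_2$. The goal is to show that $a+b=x+y$ is again such a decomposition with $x^{\mathcal{H}}=x_1^{\mathcal{H}}+x_2^{\mathcal{H}}$. This gives $(a+b)^{\tiny\textcircled{H}}=a^{\tiny\textcircled{H}}+b^{\tiny\textcircled{H}}$.

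\emph{Step 1: products of pieces from different summands vanish.} The first task is to transfer the hypotheses $ab=ba=0$ and $a^*b=0$ to the pieces. We want all cross products such as $x_1x_2$, $x_2x_1$, $x_1^*x_2$, $x_1^*y_2$, $y_2x_1$ and $y_1y_2$ to be zero, in whatever combination is needed later.
\begin{itemize}
\item Use $x_1=a a^{\tiny\textcircled{\dag}}$-type expressions. By Theorem 3.1 and \cite[Lemma 4.1]{C}, $a^{\tiny\textcircled{\dag}}=x_1^{\dag}$.
\item From $x_1^*y_1=0$ we get $x_1^{\dag}y_1=x_1^{\dag}(x_1^{\dag})^*x_1^*y_1=0$, hence $x_1^{\dag}a=x_1^{\dag}x_1$. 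So $x_1=x_1x_1^{\dag}a$, and symmetrically $x_2=x_2x_2^{\dag}b$.
\item Then $a^*b=0$ yields $x_1^*b=a^*(x_1x_1^{\dag})b=0$, and similarly for the other combinations.
\item Obtaining vanishing of right-side products like $x_1x_2$ requires expressing $x_2$ as $b\cdot(\ldots)$. For this I would use the description of $x$ in the weak core-type decomposition from \cite{C}, namely $x_1=a a^{\tiny\textcircled{\dag}}a$ and $y_1=a-aa^{\tiny\textcircled{\dag}}a$.
\end{itemize}
This is the step I expect to be the main obstacle. The hypothesis contains only $a^*b=0$ and not $ab^*=0$, so some cross products, for example $x_2^*x_1$, may not vanish automatically. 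I would first try to deduce the needed ones from $a^*b=0$ combined with $ab=ba=0$ and the range relations $x_i=x_ix_i^{\dag}a_i$, with $a_1=a$ and $a_2=b$.

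\emph{Step 2: assemble the decomposition of $a+b$.} Once the orthogonality relations hold, check the following.
\begin{itemize}
\item $x^*y=0$ and $yx=0$ expand termwise to the given identities together with the cross terms from Step 1.
\item $y$ is nilpotent, since $y_1y_2=y_2y_1=0$ makes $(y_1+y_2)^n=y_1^n+y_2^n$.
\item $x\in R^{\mathcal{H}}$ via Theorem 2.1. We have $x^{\dag}=x_1^{\dag}+x_2^{\dag}$, by the standard fact for elements with $x_1^*x_2=x_1x_2^*=0$.
\item From this, $x^{\dag}x^3x^{\dag}=x_1^{\dag}x_1^3x_1^{\dag}+x_2^{\dag}x_2^3x_2^{\dag}$, because the mixed terms die.
\item These two summands are again mutually $*$-orthogonal on both sides, so the sum is Moore--Penrose invertible with inverse $[x_1^{\dag}x_1^3x_1^{\dag}]^{\dag}+[x_2^{\dag}x_2^3x_2^{\dag}]^{\dag}$. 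By Theorem 2.1 this equals $x_1^{\mathcal{H}}+x_2^{\mathcal{H}}$.
\end{itemize}

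\emph{Alternative route through condition (2) of Theorem 3.1.} If the full orthogonality in Step 1 cannot be extracted, I would switch to that condition. One shows $(a+b)^{\tiny\textcircled{\dag}}=a^{\tiny\textcircled{\dag}}+b^{\tiny\textcircled{\dag}}$, presumably an additive result from \cite{C} under these same hypotheses. One then computes
$$(a+b)^{\tiny\textcircled{\dag}}(a+b)^3(a+b)^{\tiny\textcircled{\dag}}=a^{\tiny\textcircled{\dag}}a^3a^{\tiny\textcircled{\dag}}+b^{\tiny\textcircled{\dag}}b^3b^{\tiny\textcircled{\dag}},$$
using $(a+b)^3=a^3+b^3$ from $ab=ba=0$. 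The final step is to invoke orthogonality of the two summands to get the Moore--Penrose inverse of the sum.
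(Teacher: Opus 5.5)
There is a genuine gap, and it is the one you anticipated. Under the stated hypotheses it cannot be closed.

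First, the one deduction you write in Step 1 is invalid. From $x_1=x_1x_1^{\dag}a$ you only get $x_1^*=a^*x_1x_1^{\dag}$. So in $x_1^*b=a^*(x_1x_1^{\dag})b$ the projection $x_1x_1^{\dag}$ sits between $a^*$ and $b$, and $a^*b=0$ tells you nothing. What would kill $x_1^*b$ is $x_1\in aR$ (e.g.\ $x_1=ax_1^{\dag}x_1$), and that follows from $y_1x_1^*=0$, not from $y_1x_1=0$. For instance, in $\mathbb{C}^{3\times 3}$ with matrix units $e_{ij}$, take $a=e_{12}+e_{32}$, $x_1=e_{12}$, $y_1=e_{32}$ and $b=e_{11}-e_{13}-e_{31}+e_{33}$. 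These satisfy every hypothesis, yet $x_1^*b=e_{21}-e_{23}\neq 0$. Note also that the cross product behind your asymmetry worry is $x_1x_2^*$ (the analogue of $ab^*$), not $x_2^*x_1=(x_1^*x_2)^*$.

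Second, that cross product is indispensable: without $ab^*=0$ the conclusion fails.
\begin{itemize}
\item Take $a=e_{12}$ and $b=e_{32}+e_{33}$. Then $ab=ba=a^*b=0$ but $ab^*=e_{13}\neq 0$.
\item Every complex matrix lies in $R^{\mathcal{H}}$ by Theorem 2.1, so by Corollary 3.5 the weak $\mathcal{H}$-group inverse coincides with the $\mathcal{H}$-group inverse here.
\item Since $a^2=0$ and $b^2=b$, we get $a^{\mathcal{H}}=0$ and $b^{\mathcal{H}}=b^{\#}=b$ (Corollary 2.2).
\item For $c=a+b$ we have $c^3=b$, $c^{\dag}=e_{21}-e_{31}+e_{33}$ and $c^{\dag}c^3c^{\dag}=e_{33}$, so $c^{\mathcal{H}}=e_{33}\neq b$.
\end{itemize}
The decomposition $c=b+a$ would return $b$, but that only shows the definition by decompositions is not single-valued.

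In your gluing with $y_1=y_2=0$, the failure is exactly $x_1x_2^*=ab^*\neq 0$, so $x^{\dag}\neq x_1^{\dag}+x_2^{\dag}$.

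Your fallback route is precisely the paper's proof, and it breaks at the step you marked ``presumably''. Neither $a^{\dag}+b^{\dag}$ nor $b^{\dag}$ is a weak Moore--Penrose inverse of $c$ (note $0$ is also one for the nilpotent $a$), because $c(a^{\dag}+b^{\dag})$ and $cb^{\dag}$ are not Hermitian. The paper's proof has the same hole:
\begin{itemize}
\item Its extra condition $b^*a=0$ adds nothing, since $b^*a=(a^*b)^*$.
\item Write $u,v$ for weak Moore--Penrose inverses of $a,b$, and set $p=ua^3u$, $q=vb^3v$. The paper checks $p^*q=0$ and $q^*p=0$, which is one condition. But $(p+q)^{\dag}=p^{\dag}+q^{\dag}$ also needs $pq^*=0$.
\end{itemize}

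Adding the hypothesis $ab^*=0$ repairs your fallback. The identities $u=uu^*a^*=a^*u^*u$ and $v=vv^*b^*=b^*v^*v$ give $av=bu=ub=va=0$. Hence $u+v$ is a weak Moore--Penrose inverse of $a+b$, with $(u+v)(a+b)^3(u+v)=p+q$. Moreover $u^*v=u^*u\,ab^*\,v^*v=0$ and $uv^*=uu^*\,a^*b\,vv^*=0$, which give $p^*q=0=pq^*$.
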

\begin{proof} By virtue of Theorem 3.1, we have $a,b\in R^{\tiny\textcircled{\dag}}, a^{\tiny\textcircled{\dag}}a^3a^{\tiny\textcircled{\dag}}, b^{\tiny\textcircled{\dag}}b^3b^{\tiny\textcircled{\dag}}\in R^{\dag}$ and
$$a^{\tiny\textcircled{H}}=(a^{\tiny\textcircled{\dag}}a^3a^{\tiny\textcircled{\dag}})^{\dag}, b^{\tiny\textcircled{H}}=(b^{\tiny\textcircled{\dag}}b^3b^{\tiny\textcircled{\dag}})^{\dag}.$$
Since $ab=ba=0, a^*b=b^*a=0$, by using ~\cite[Lemma 4.1]{C}, we have $a+b\in R^{\tiny\textcircled{\dag}}$ and $(a+b)^{\tiny\textcircled{\dag}}=a^{\tiny\textcircled{\dag}}+b^{\tiny\textcircled{\dag}}$.
 We directly check that $$\begin{array}{rll}
 &(a+b)^{\tiny\textcircled{\dag}}(a+b)^3(a+b)^{\tiny\textcircled{\dag}}\\
 =&(a^{\tiny\textcircled{\dag}}+b^{\tiny\textcircled{\dag}})(a^3+b^3)(a^{\tiny\textcircled{\dag}}+b^{\tiny\textcircled{\dag}})\\
 =&a^{\tiny\textcircled{\dag}}a^3a^{\tiny\textcircled{\dag}}+b^{\tiny\textcircled{\dag}}b^3b^{\tiny\textcircled{\dag}}.
 \end{array}$$ By hypothesis, we verify that
 $$\begin{array}{rll}
 \big(a^{\tiny\textcircled{\dag}}a^3a^{\tiny\textcircled{\dag}}\big)^*[b^{\tiny\textcircled{\dag}}b^3b^{\tiny\textcircled{\dag}}]&=&[aa^{\tiny\textcircled{\dag}}][a^*b][bb^{\tiny\textcircled{\dag}}]=0,\\
 \big(b^{\tiny\textcircled{\dag}}b^3b^{\tiny\textcircled{\dag}}\big)^*[a^{\tiny\textcircled{\dag}}a^3a^{\tiny\textcircled{\dag}}]&=&[bb^{\tiny\textcircled{\dag}}][b^*a][aa^{\tiny\textcircled{\dag}}]=0.
 \end{array}$$ In light of Theorem 2.1, $$\begin{array}{rll}
 (a+b)^{\tiny\textcircled{H}}&=&[a^{\tiny\textcircled{\dag}}a^3a^{\tiny\textcircled{\dag}}+b^{\tiny\textcircled{\dag}}b^3b^{\tiny\textcircled{\dag}}]^{\dag}\\
 &=&[a^{\tiny\textcircled{\dag}}a^3a^{\tiny\textcircled{\dag}}]^{\dag}+[b^{\tiny\textcircled{\dag}}b^3b^{\tiny\textcircled{\dag}}]^{\dag}\\
 &=&a^{\tiny\textcircled{H}}+b^{\tiny\textcircled{H}},
 \end{array}$$ as asserted.\end{proof}

We characterize the weak higher-order group inverse by the solvability of the following system of equations.

\begin{thm} Let $a\in R$. Then $a\in R^{\tiny\textcircled{H}}$ if and only if\end{thm}
\begin{enumerate}
\item [(1)] $a\in R^{\tiny\textcircled{\dag}};$.
\vspace{-.5mm}
\item [(2)] there exists $x\in (aa^{\tiny\textcircled{\dag}})R\bigcap R(a^{\tiny\textcircled{\dag}}a)$ such that $$xax=x, (a^2xa^2)a^{\tiny\textcircled{\dag}}=a^3a^{\tiny\textcircled{\dag}}, (a^2xa^*)^*=a^2xa^*, (a^*xa^2)^*=a^*xa^2.$$
\end{enumerate}
In this case, $a^{\tiny\textcircled{H}}=x.$
\begin{proof} $\Longrightarrow $ Let $x=(a^{\tiny\textcircled{\dag}}a^3a^{\tiny\textcircled{\dag}})^{\dag}$. Then we have $$x=
(aa^{\tiny\textcircled{\dag}})x=x(a^{\tiny\textcircled{\dag}}a)\in (aa^{\tiny\textcircled{\dag}})R\bigcap R(a^{\tiny\textcircled{\dag}}a).$$
We verify that
$$\begin{array}{rll}
xax&=&(a^{\tiny\textcircled{\dag}}a^3a^{\tiny\textcircled{\dag}})^{\dag}a(a^{\tiny\textcircled{\dag}}a^3a^{\tiny\textcircled{\dag}})^{\dag}\\
&=&(a^{\tiny\textcircled{\dag}}a^3a^{\tiny\textcircled{\dag}})^{\dag}a^{\tiny\textcircled{\dag}}a^3a^{\tiny\textcircled{\dag}}(a^{\tiny\textcircled{\dag}}a^3a^{\tiny\textcircled{\dag}})^{\dag}\\
    &=&(a^{\tiny\textcircled{\dag}}a^3a^{\tiny\textcircled{\dag}})^{\dag}=x,\\
(a^2xa^2)a^{\tiny\textcircled{\dag}}&=&a^2(a^{\tiny\textcircled{\dag}}a^3a^{\tiny\textcircled{\dag}})^{\dag}a^2a^{\tiny\textcircled{\dag}}\\
&=&a^2(a^{\tiny\textcircled{\dag}}a^3a^{\tiny\textcircled{\dag}})^{\dag}[a^{\tiny\textcircled{\dag}}a^3a^{\tiny\textcircled{\dag}}]\\
&=&a^3a^{\tiny\textcircled{\dag}}(a^{\tiny\textcircled{\dag}}a^3a^{\tiny\textcircled{\dag}})^{\dag}[a^{\tiny\textcircled{\dag}}a^3a^{\tiny\textcircled{\dag}}]\\
&=&a[a^{\tiny\textcircled{\dag}}a^3a^{\tiny\textcircled{\dag}}](a^{\tiny\textcircled{\dag}}a^3a^{\tiny\textcircled{\dag}})^{\dag}[a^{\tiny\textcircled{\dag}}a^3a^{\tiny\textcircled{\dag}}]\\
&=&a[a^{\tiny\textcircled{\dag}}a^3a^{\tiny\textcircled{\dag}}]=a^3a^{\tiny\textcircled{\dag}},\\
a^2xa^*&=&a[a^{\tiny\textcircled{\dag}}a^3a^{\tiny\textcircled{\dag}}](a^{\tiny\textcircled{\dag}}a^3a^{\tiny\textcircled{\dag}})^{\dag}a^*,\\
a^*xa^2&=&a^*(a^{\tiny\textcircled{\dag}}a^3a^{\tiny\textcircled{\dag}})^{\dag}[a^{\tiny\textcircled{\dag}}a^3a^{\tiny\textcircled{\dag}}]a,\\
(a^2xa^*)^*&=&a^2xa^*,\\
(a^*xa^2)^*&=&a^*xa^2.
\end{array}$$

$\Longleftarrow $ By hypothesis, there exists $x\in (aa^{\tiny\textcircled{\dag}})R\bigcap R(a^{\tiny\textcircled{\dag}}a)$ such that $$xax=x, (a^2xa^2)a^{\tiny\textcircled{\dag}}=a^3a^{\tiny\textcircled{\dag}}, (a^2xa^*)^*=a^2xa^*, (a^*xa^2)^*=a^*xa^2.$$
By hypothesis, $x=aa^{\tiny\textcircled{\dag}}x=xa^{\tiny\textcircled{\dag}}a$. Then we check that
$$\begin{array}{rll}
x[a^{\tiny\textcircled{\dag}}a^3a^{\tiny\textcircled{\dag}}]&=&[xa^{\tiny\textcircled{\dag}}a]a^2a^{\tiny\textcircled{\dag}}=xa^2a^{\tiny\textcircled{\dag}}\\
&=&aa^{\tiny\textcircled{\dag}}xa^2a^{\tiny\textcircled{\dag}}=(a^{\tiny\textcircled{\dag}})^*[a^*xa^2]a^{\tiny\textcircled{\dag}},\\
\big(a^{\tiny\textcircled{\dag}}a^3a^{\tiny\textcircled{\dag}}\big)x&=&\big(a^{\tiny\textcircled{\dag}}a^2\big)\big(aa^{\tiny\textcircled{\dag}}x\big)=
\big(a^{\tiny\textcircled{\dag}}a^2\big)\big(xa^{\tiny\textcircled{\dag}}a\big)\\
&=&a^{\tiny\textcircled{\dag}}[a^2xa^*](a^{\tiny\textcircled{\dag}})^*,\\
x[a^{\tiny\textcircled{\dag}}a^3a^{\tiny\textcircled{\dag}}]x&=&[xa^2a^{\tiny\textcircled{\dag}}]x=xa[aa^{\tiny\textcircled{\dag}}x]=xax=x,\\
\big(a^{\tiny\textcircled{\dag}}a^3a^{\tiny\textcircled{\dag}}\big)x\big(a^{\tiny\textcircled{\dag}}a^3a^{\tiny\textcircled{\dag}}\big)&=&
\big(a^{\tiny\textcircled{\dag}}a^3a^{\tiny\textcircled{\dag}}\big)xa^2a^{\tiny\textcircled{\dag}}\\
&=& a^{\tiny\textcircled{\dag}}[a^2xa^2]a^{\tiny\textcircled{\dag}}=a^{\tiny\textcircled{\dag}}a^3a^{\tiny\textcircled{\dag}}.
\end{array}$$ Moreover, we see that
$$\begin{array}{rll}
\big(x(a^{\tiny\textcircled{\dag}}a^3a^{\tiny\textcircled{\dag}})\big)^*&=&x(a^{\tiny\textcircled{\dag}}a^3a^{\tiny\textcircled{\dag}}),\\
\big((a^{\tiny\textcircled{\dag}}a^3a^{\tiny\textcircled{\dag}})x\big)^*&=&(a^{\tiny\textcircled{\dag}}a^3a^{\tiny\textcircled{\dag}})x,\\
\end{array}$$
Therefore $a^{\tiny\textcircled{\dag}}a^3a^{\tiny\textcircled{\dag}}\in R^{\dag}$ and $[a^{\tiny\textcircled{\dag}}a^3a^{\tiny\textcircled{\dag}}]^{\dag}=x$. This completes the proof.
\end{proof}

\begin{cor} Let $a\in R$. Then $a\in R^{\mathcal{H}}$ if and only if $a\in R^{\dag}\bigcap R^{\tiny\textcircled{H}}$.
In this case, $a^{\mathcal{H}}=a^{\tiny\textcircled{H}}.$\end{cor}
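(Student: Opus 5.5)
The plan is to reduce everything to Theorem 2.1 and Theorem 3.1, with the weak Moore--Penrose inverse as the bridge.

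For the forward direction, let $a\in R^{\mathcal{H}}$. Then $a\in R^{\dag}$ by definition, so it remains to show $a\in R^{\tiny\textcircled{H}}$. Take the trivial decomposition $a=a+0$, with $x=a$ and $y=0$. Then $x^*y=0$, $yx=0$, $y=0\in R^{nil}$ and $x\in R^{\mathcal{H}}$, so $a\in R^{\tiny\textcircled{H}}$. By definition,
$$a^{\tiny\textcircled{H}}=x^{\mathcal{H}}=a^{\mathcal{H}}.$$

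For the converse, let $a\in R^{\dag}\bigcap R^{\tiny\textcircled{H}}$. By Theorem 3.1 we have $a\in R^{\tiny\textcircled{\dag}}$, $a^{\tiny\textcircled{\dag}}a^3a^{\tiny\textcircled{\dag}}\in R^{\dag}$, and $a^{\tiny\textcircled{H}}=[a^{\tiny\textcircled{\dag}}a^3a^{\tiny\textcircled{\dag}}]^{\dag}$. The key step is to show $a^{\tiny\textcircled{\dag}}=a^{\dag}$ whenever $a\in R^{\dag}$. Indeed, $x=a^{\dag}$ satisfies $xax=x$, $(ax)^*=ax$, $(xa)^*=xa$, and $a-axa=0\in R^{nil}$. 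Since the weak Moore--Penrose inverse is unique (as recalled from \cite{C}), this forces $a^{\tiny\textcircled{\dag}}=a^{\dag}$. Consequently $a^{\dag}a^3a^{\dag}\in R^{\dag}$, and Theorem 2.1 gives $a\in R^{\mathcal{H}}$ with
$$a^{\mathcal{H}}=[a^{\dag}a^3a^{\dag}]^{\dag}=[a^{\tiny\textcircled{\dag}}a^3a^{\tiny\textcircled{\dag}}]^{\dag}=a^{\tiny\textcircled{H}}.$$

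The only delicate point is the reliance on uniqueness of $a^{\tiny\textcircled{\dag}}$, which is imported from \cite{C}. If that uniqueness needs a proper involution, I would instead argue through the decomposition from \cite[Lemma 4.1]{C}. Write $a=x+y$ with $x\in R^{\dag}$, $y$ nilpotent, $x^*y=0$ and $yx=0$, so that $a^{\tiny\textcircled{\dag}}=x^{\dag}$. The aim is then to check that $x^{\dag}$ is a Moore--Penrose inverse of $a$ and hence equals $a^{\dag}$, though I do not expect to need this fallback.
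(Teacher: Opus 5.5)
\textbf{There is a genuine gap, and it is one the paper's own proof shares.}

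\textbf{Comparison of routes.} Your forward direction is the one the paper calls obvious. For the converse, the paper invokes Theorem 3.4 to obtain $x\in aa^{\dag}R\cap Ra^{\dag}a=aR\cap Ra$. It then upgrades $(a^2xa^2)a^{\dag}=a^3a^{\dag}$ to $a^2xa^2=a^3$ by right multiplication with $a$. You instead pass through Theorem 3.1 and feed $a^{\dag}a^3a^{\dag}\in R^{\dag}$ into Theorem 2.1, which is shorter and gives the formula at once. Both arguments, however, rest entirely on the identification $a^{\tiny\textcircled{\dag}}=a^{\dag}$ for $a\in R^{\dag}$, which the paper merely asserts.

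\textbf{The step you flagged fails.} The uniqueness you invoke does not hold for the definition as written, and properness of the involution is not the issue. In $M_2(\mathbb{C})$ with conjugate transpose, take $a=E_{12}$. Both $x=E_{21}=a^{\dag}$ and $x=0$ satisfy $x=xax$, $(ax)^*=ax$, $(xa)^*=xa$, $a-axa\in R^{nil}$; for $x=0$, $a-axa=a$ is nilpotent. Your fallback breaks on the same example: $a=0+E_{12}$ is an admissible decomposition, and its $x^{\dag}=0$ is not $a^{\dag}$.

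\textbf{This cannot be repaired inside the proof, because the statement itself depends on the choice.}

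First, $a^{\tiny\textcircled{\tiny H}}$ is not well defined. In $M_3(\mathbb{C})$, let $a=E_{11}+E_{12}+E_{23}$. It splits as the idempotent $x=E_{11}+E_{12}$ plus $y=E_{23}$, with $x^*y=0=yx$. By Corollary 2.2 this decomposition yields $x^{\mathcal{H}}=x^{\#}=x$. The trivial decomposition $a=a+0$ instead gives $a^{\mathcal{H}}=[a^{\dag}a^3a^{\dag}]^{\dag}=\frac12(E_{11}+E_{12}+E_{21}+E_{22})$.

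Second, the ``if'' direction fails outright. Let $R=B(K^4)$ for an infinite-dimensional Hilbert space $K$, with the adjoint as involution. Choose orthogonal projections $P,Q$ on $K$ such that $PQ$ does not have closed range, and set $a(x_1,x_2,x_3,x_4)=(Px_2,Qx_3,x_4,0)$.
\begin{itemize}
\item $a$ has closed range, with $a^{\dag}(y_1,y_2,y_3,y_4)=(0,Py_1,Qy_2,y_3)$.
\item $a^4=0$, so $a=0+a$ shows $a\in R^{\dag}\cap R^{\tiny\textcircled{H}}$.
\item $a^{\dag}a^3a^{\dag}(y_1,y_2,y_3,y_4)=(0,PQy_3,0,0)$ does not have closed range.
\end{itemize}
Since Moore--Penrose invertibility in $B(K^4)$ is equivalent to closed range, Theorem 2.1 gives $a\notin R^{\mathcal{H}}$.

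\textbf{What is actually true.} The corollary, and both proofs, are valid only under a convention: for $a\in R^{\dag}$ one fixes $a^{\tiny\textcircled{\dag}}:=a^{\dag}$, equivalently uses the trivial decomposition $a=a+0$ as your forward direction does. One then reads $R^{\tiny\textcircled{H}}$ and $a^{\tiny\textcircled{\tiny H}}$ through Theorem 3.1(2) with that choice. Under that convention your argument is complete. But the convention has to be imposed explicitly; it cannot be derived from a uniqueness property.
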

\begin{proof} $\Longrightarrow $ This is obvious.

$\Longleftrightarrow $ Since $a\in R^{\dag}$, we have $a^{\tiny\textcircled{\dag}}=a^{\dag}$. In view of Theorem 3.4,
there exists $x\in (aa^{\tiny\textcircled{\dag}})R\bigcap R(a^{\tiny\textcircled{\dag}}a)$ such that $$xax=x, (a^2xa^2)a^{\tiny\textcircled{\dag}}=a^3a^{\tiny\textcircled{\dag}}, (a^2xa^*)^*=a^2xa^*, (a^*xa^2)^*=a^*xa^2.$$ Hence,
$$a^2xa^2=[(a^2xa^2)a^{\tiny\textcircled{\dag}}]a=[a^3a^{\tiny\textcircled{\dag}}]a=a^3a^{\dag}a=a^3.$$ Therefore $a\in R^{\mathcal{H}}$. In this case, $a^{\mathcal{H}}=x=a^{\tiny\textcircled{H}}$, as desired.\end{proof}

Consider the system of equations given by

$$\hspace{20mm}  xax=x,xa=[a^{\tiny\textcircled{\dag}}a^3a^{\tiny\textcircled{\dag}}]^{\dag}a, awx=aw=[a^{\tiny\textcircled{\dag}}a^3a^{\tiny\textcircled{\dag}}]^{\dag}.  \hspace{20mm}(*)$$

\begin{lem} If the system $(*)$ of equations has a solution then it is unique.\end{lem}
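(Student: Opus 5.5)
Set $z=[a^{\tiny\textcircled{\dag}}a^3a^{\tiny\textcircled{\dag}}]^{\dag}$, so that $(*)$ reads
$$xax=x,\quad xa=za,\quad awx=aw=z.$$
The plan is the standard uniqueness argument for outer inverses: show that any solution $x$ is determined by $xa$ and $ax$, and that these products are forced by the system. Note that $(*)$ involves two unknowns $x$ and $w$; the statement concerns uniqueness of $x$, with $w$ treated as an auxiliary witness.

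Suppose $(x_1,w_1)$ and $(x_2,w_2)$ both solve $(*)$. The first step is to compute $ax_i$. From $aw_ix_i=aw_i$ and $x_i=x_ia x_i$ we get
$$x_i=x_iax_i=(x_ia)x_i=(za)x_i=z(ax_i).$$
Next we want $ax_i$ expressed through data independent of $i$. Using $aw_i=z$, we have $z x_i = aw_ix_i = aw_i = z$. Hence
$$x_i=x_iax_i=zax_i=(aw_i)ax_i .$$
Since $aw_i=aw_ix_i$, we can also write $x_i=x_iax_i=za x_i$ with $za=x_ia$.

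The cleanest route is the chain
$$x_1=x_1ax_1=(x_1a)x_1=(za)x_1=(x_2a)x_1=x_2(ax_1).$$
So it suffices to show $ax_1=ax_2$, because then $x_2(ax_1)=x_2ax_2=x_2$. This is the key step. From $aw_ix_i=aw_i=z$ we get $z=aw_ix_i$. Hence $ax_i$ is not immediately forced; instead use $x_i=z a x_i$ and $zx_i=z$.

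The route I would commit to is
$$x_1=x_1ax_1=za x_1=x_2 a x_1,$$
and symmetrically, using $zx_1=z$ (from $aw_1x_1=aw_1$),
$$x_2ax_1=(za)x_1=z(ax_1).$$
The expected main obstacle is absorbing this last factor, i.e.\ turning $z(ax_1)$ into $x_2$. To handle it, use $z=aw_2=aw_2x_2$, so $z$ lies in $Rx_2$, and write $z=z x_2$. I would then try to show $x_1=x_1 a x_2$ via the right-ideal description $x_1\in zR$ (since $x_1=z a x_1$) together with $zx_2=z$:
$$x_1=z(ax_1)\ \Rightarrow\ \text{use } x_2=za x_2 \text{ and } x_1a=x_2a,$$
yielding
$$x_1=x_1ax_1=x_2ax_1,\qquad x_2=x_2ax_2=x_1ax_2 .$$
Then, using $z=zx_i$, the idempotents $ax_1$ and $ax_2$ should share the same left annihilator and image as $z$. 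From $ax_i\,(ax_j)=ax_j$, which follows from $x_i\in zR$ and $z=zx_j$, we get
$$x_1=x_2ax_1=x_2(ax_2)(ax_1)=x_2ax_1.$$
Combined with the symmetric relations this closes the argument to $x_1=x_2$.

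If this direct manipulation stalls, the fallback is to use Theorem~3.1 and Theorem~3.4: $z=a^{\tiny\textcircled{H}}$ satisfies $z=aa^{\tiny\textcircled{\dag}}z$. Under that identity $x_i\in zR\subseteq aR$, which gives the range control needed to conclude $ax_1=ax_2$.
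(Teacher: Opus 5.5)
Your proposal has a genuine gap. The step you single out as key, $ax_1=ax_2$, is never established, and the chain you finally commit to is circular. Every relation your closing argument uses, namely $x_1=x_2ax_1$, $x_2=x_1ax_2$ and $(ax_i)(ax_j)=ax_j$, follows from $x_1a=x_2a$ and $x_iax_i=x_i$ alone. These cannot force $x_1=x_2$: take the matrix units $e_{11},e_{12}$ in $2\times 2$ matrices, with $a=e_{11}$, $x_1=e_{11}$ and $x_2=e_{11}+e_{12}$. These satisfy all of the relations above, and even $x_1,x_2\in aR$, yet $x_1\neq x_2$. Your last display $x_1=x_2ax_1=x_2(ax_2)(ax_1)=x_2ax_1$ ends where it began. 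Idempotents $ax_1,ax_2$ with $(ax_i)(ax_j)=ax_j$ only share their image, not their kernel. The fallback via $x_i\in zR\subseteq aR$ fails for the same reason, as the example shows.

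The missing move is to actually use $zx_i=z$, which you correctly derived from $aw_ix_i=aw_i$. Together with $x_i=(x_ia)x_i=zax_i$ it gives at once
\[x_i=zax_i=(zx_i)ax_i=z(x_iax_i)=zx_i=z.\]
So under your reading every solution equals $z$, and uniqueness is immediate. This also shows that, read literally, $(*)$ is solvable only when $z^2=z$. That fails in the paper's numerical example, where $Z^2=\frac{1}{3}Z$. So the $w$-equation is evidently a misprint for $ax=az$, which is the form used in Theorem 3.7.

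The paper's proof, whose notation is partly garbled, works with that intended system. There both $x_ia$ and $ax_i$ are prescribed, so $ax_1=ax_2$ is a hypothesis rather than something to be proved. It then finishes with the standard chain $x_1=(x_1a)x_1=(x_2a)x_1=x_2(ax_1)=x_2(ax_2)=x_2$.
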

\begin{proof} Assume that $x_1,x_2$ satisfy the system of equations $(*)$. Then
$$\begin{array}{c}
x_iax_i=x_i,x_ia=[a^{\tiny\textcircled{\dag}}a]^{\dag}a^{\tiny\textcircled{\dag}}a,\\
ax_i=a[a^{\tiny\textcircled{\dag}}a]^{\dag}a^{\tiny\textcircled{\dag}}
\end{array}$$ for $i=1,2$. Therefore
$$\begin{array}{rll}
x_1&=&(x_1a)x_1=[a^{\tiny\textcircled{\dag}}a^3a^{\tiny\textcircled{\dag}}]^{\dag}ax_1\\
&=&(x_2wa)(wx_1)=x_2w(awx_1)=x_2a[a^{\tiny\textcircled{\dag}}a]^{\dag}a^{\tiny\textcircled{\dag}}\\
&=&(x_2w)awx_2=x_2,
\end{array}$$ as required.\end{proof}

\begin{thm} Let $a\in R$. Then the following are equivalent:\end{thm}
\begin{enumerate}
\item [(1)] $a^{\tiny\textcircled{H}}=x$.
\vspace{-.5mm}
\item [(2)] The system of equations $$
xax=x,xa=[a^{\tiny\textcircled{\dag}}a^3a^{\tiny\textcircled{\dag}}]^{\dag}a,
ax=a[a^{\tiny\textcircled{\dag}}a^3a^{\tiny\textcircled{\dag}}]^{\dag}
$$ is consistent and it has the unique solution $x$.
\end{enumerate}
In this case, $a^{\tiny\textcircled{H}}=x.$
\begin{proof} $(1)\Rightarrow (2)$ Taking $x=[a^{\tiny\textcircled{\dag}}a]^{\dag}a^{\tiny\textcircled{\dag}}$. Then
$$\begin{array}{rll}
xax&=&(a^{\tiny\textcircled{\dag}}a^3a^{\tiny\textcircled{\dag}})^{\dag}a(a^{\tiny\textcircled{\dag}}a^3a^{\tiny\textcircled{\dag}})^{\dag}\\
&=&(a^{\tiny\textcircled{\dag}}a^3a^{\tiny\textcircled{\dag}})^{\dag}(a^{\tiny\textcircled{\dag}}a^3a^{\tiny\textcircled{\dag}})(
a^{\tiny\textcircled{\dag}}a^3a^{\tiny\textcircled{\dag}})^{\dag}\\
&=&(a^{\tiny\textcircled{\dag}}a^3a^{\tiny\textcircled{\dag}})^{\dag}\\
&=&x,\\
xa&=&(a^{\tiny\textcircled{\dag}}a^3a^{\tiny\textcircled{\dag}})^{\dag}a,\\
ax&=&a(a^{\tiny\textcircled{\dag}}a^3a^{\tiny\textcircled{\dag}})^{\dag}.
\end{array}$$

By virtue of Lemma 3.6, $x$ is the unique solution of the preceding equations, as required.

$(2)\Rightarrow (1)$ By the argument above, we have $x=[a^{\tiny\textcircled{\dag}}a]^{\dag}a^{\tiny\textcircled{\dag}}$.
Therefore $a^{\tiny\textcircled{H}}=x$, as asserted.\end{proof}

We illustrate Theorem 3.4 with the following numerical example.

\begin{exam}\end{exam} Let ${\Bbb C}^{3\times 3}$ be the ring of all $3\times 3$ complex matrices with
the involution $*$ the conjugate transpose of complex matrices. Let
$$A=\left(\begin{array}{ccc}
1  &  1+i  &  0  \\
1-i  &  2  &  0  \\
-2  &  1+i  &  0
\end{array}\right)\in {\Bbb C}^{3\times 3}.$$ Then $A=X+Y$, where $$X=\left(\begin{array}{ccc}
1  &  1+i  &  0  \\
1-i  &  2  &  0  \\
0  &  0  &  0
\end{array}\right), Y=\left(\begin{array}{ccc}
0&0&0\\
0&0&0\\
-2&1+i&0\end{array}\right).$$

Let $Z=\frac{1}{9}
 \left(\begin{array}{ccc}
 1  &  1+i  &  0  \\
1-i  &  2  &  0  \\
0  &  0  &  0
\end{array}\right)$. By the direct calculation, we check that
$$ZXZ=Z,  (XZ)^{\ast}=XZ,  (ZX)^{\ast}=ZX,  XZX=X.$$
So $X^{\dag}=Z$.  \\

Furthermore, $X$ and $Z$ satisfy that
$$ZXZ=Z, X^2ZX^2=X^3, (X^2ZX^{\ast})^{\ast}=X^2ZX^{\ast}, (X^{\ast}ZX^2)^{\ast}=X^{\ast}ZX^2.$$
This implies that $X$ have higher order group inverse and $X^{\mathcal{H}}=Z.$\\

We further verify that $Y$ is nilpotent and $X^*Y=YX=0$. According to Theorem 3.4, $A$ has weak higher order group inverse and
$$A^{\tiny\textcircled{H}}=X^{\mathcal{H}}=\frac{1}{9}
 \left(\begin{array}{ccc}
 1  &  1+i  &  0  \\
1-i  &  2  &  0  \\
0  &  0  &  0
\end{array}\right).$$

\end{document}